\documentclass[11pt]{article}
\usepackage{amsmath,amssymb,amsfonts,amsthm}
\usepackage{float}
\numberwithin{equation}{section}
\newtheorem{theorem}{Theorem}[section]

\newtheorem{lemma}[theorem]{Lemma}
\newtheorem{proposition}[theorem]{Proposition}
\newtheorem{corollary}[theorem]{Corollary}

\pdfpagewidth 8.5in
\pdfpageheight 11in
\setlength\topmargin{0in}
\setlength\headheight{0in}
\setlength\headsep{0in}
\setlength\textheight{7.7in}
\setlength\textwidth{6.5in}
\setlength\oddsidemargin{0in}
\setlength\evensidemargin{0in}
\setlength\parindent{0.25in}
\setlength\parskip{0.25in} 
\begin{document}
\begin{center}
{\Large{\textbf{Introduction to the McPherson number, $\Upsilon(G)$ of a simple connected graph}}} 
\end{center}
\vspace{0.5cm}
\large{\centerline{(Johan Kok, Susanth C)\footnote {\textbf {Affiliation of author(s):}\\
\noindent Johan Kok (Tshwane Metropolitan Police Department), City of Tshwane, Republic of South Africa\\
e-mail: kokkiek2@tshwane.gov.za\\ \\
\noindent Susanth C (Department of Mathematics, Vidya Academy of Science and Technology), Thalakkottukara, Thrissur-680501, Republic of India\\
e-mail: susanth\_c@yahoo.com}}
\vspace{0.5cm}
\begin{abstract}
\noindent The concept of the \emph{McPherson number} of a simple connected graph $G$ on $n$ vertices denoted by $\Upsilon(G),$ is introduced. The recursive concept, called the \emph{McPherson recursion}, is a series of \emph{vertex explosions} such that on the first iteration a vertex $v \in V(G)$ explodes to arc (directed edges) to all vertices $u \in V(G)$ for which the edge $vu \notin E(G),$ to obtain the mixed graph $G'_1.$ Now $G'_1$ is considered on the second iteration and a vertex $w \in V(G'_1) = V(G)$ may explode to arc to all vertices $z \in V(G'_1)$ if edge $wz \notin E(G)$ and arc $(w,z)$ or $(z,w) \notin E(G'_1).$ The \emph{McPherson number} of a simple connected graph $G$ is the minimum number of iterative vertex explosions say $\ell$, to obtain the mixed graph $G'_\ell$ such that the underlying graph of $G'_\ell$ denoted $G^*_\ell$ has $G^*_\ell \simeq K_n.$ We determine the \emph{McPherson number} for paths, cycles and $n$-partite graphs. We also determine the \emph{McPherson number} of the finite Jaco Graph $J_n(1), n \in \Bbb N.$ It is hoped that this paper will encourage further exploratory research.
\end{abstract}
\noindent {\footnotesize \textbf{Keywords:} McPherson number, McPherson recursion, McPherson discrepancy, McPherson stability, Jaco graph.}\\ \\
\noindent {\footnotesize \textbf{AMS Classification Numbers:} 05C07, 05C20, 05C38, 05C75, 05C85} 
\section{Introduction} 
The concept of the \emph{McPherson number} of a simple connected graph $G$ on $n$ vertices denoted by $\Upsilon(G),$  is introduced. The recursive concept, called the \emph{McPherson recursion}, is a series of \emph{vertex explosions} such that on the first iteration a vertex $v \in V(G)$ explodes to arc (directed edges) to all vertices $u \in V(G)$ for which the edge $vu \notin E(G),$ to obtain the mixed graph $G'_1.$ Now $G'_1$ is considered on the second iteration and a vertex $w \in V(G'_1) = V(G)$ may explode to arc to all vertices $z \in V(G'_1)$ if edge $wz \notin E(G)$ and arc $(w,z)$ or $(z,w) \notin E(G'_1).$ The \emph{McPherson number} of a simple connected graph $G$ is the minimum number of iterative vertex explosions say $\ell$, to obtain the mixed graph $G'_\ell$ such that the underlying graph $G^*_\ell \simeq K_n.$ Note that $G = G^*_0.$\\ \\
It is easy to see that the total number of arcs created is $\epsilon(K_n) - \epsilon(G).$ It is equally easy to see that $\Upsilon(K_n) = 0$ and $\Upsilon(K_n - uv)_{uv \in E(K_n)} = 1.$ It is not that easy to see that the sequence of vertex explosions does not generally obey the commutative law. This will be illustrated by way of an example.\\ \\
\noindent \textbf{Example 1.} Let $G$ be the simple connected graph with $V(G) =\{v_1, v_2, v_3, v_4, v_5, v_6\}$ and $E(G) = \{v_1v_2, v_1v_3, v_1v_4, v_1v_5, v_2v_5, v_2v_6, v_3v_4, v_3v_5\}.$\\ \\
On the first iteration let vertex $v_6$ explode to create the arcs $(v_6,v_1), (v_6, v_3), (v_6, v_4), (v_6, v_5).$ On the second iteration let $v_2 \in V(G'_1)$ explode to create arcs $(v_2, v_3), (v_2, v_4).$ Finally let $v_4 \in V(G'_2)$ explode to create arc $(v_4, v_5).$ After validating all permutated sequences of vertex explosions we conclude that, since three explosions are the minimum needed to ensure that the underlying graph $G^*_3 \simeq K_6$, it follows that $\Upsilon(G) = 3.$\\ \\
However, if vertex $v_1 \in V(G)$ explodes first, the arc $(v_1, v_6)$ is created. If secondly, the vertex $v_3 \in V(G'_1)$ explodes the arcs $(v_3, v_2)$ and $(v_3, v_6)$ are created. Followed by the next iteration let $v_2 \in V(G'_2)$ explode to create arc $(v_2, v_4)$ followed by the explosion of vertex $v_5 \in V(G'_3)$ to create arcs $(v_5, v_4), (v_5, v_6).$  The final vertex explosion is that of vertex $v_4$ to create the arc $(v_4, v_6)$. Now only is the underlying graph $G^*_5 \simeq K_6.$ Since, minimality is defined we see that the sequence of explosions does not generally obey the commutative law.\footnote{Whilst listening to an amasing djembe drumming group from Ghana on Friday, 10 October 2014, celebrating the $15^{th}$ anniversary of Klitsgras Drumming Circle, the concept of McPherson numbers struck Kokkie's mind. Thank you to them. It reminds us there are mathematics in music and vice versa.}
\section{McPherson numbers of Jaco Graphs, Paths, Cycles and n-Partite Graphs}
\noindent Before we consider specialised graphs we will endeavour to find the optimal algorithm to calculate the McPherson number, $\Upsilon(G)$ of a simple connected graph $G$. The \emph{McPherson recursion} described in Lemma 2.2 below is sufficient.
\begin{lemma}
The McPherson number of a simple connected graph $G$ on n vertices is\\ $\Upsilon(G) \leq \epsilon(K_n) - \epsilon(G).$
\end{lemma}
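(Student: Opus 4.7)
The plan is to exhibit an explicit (worst-case) strategy that uses at most $\epsilon(K_n)-\epsilon(G)$ vertex explosions to complete the underlying graph, and to observe that the existence of such a strategy immediately gives the claimed upper bound on $\Upsilon(G)$, since $\Upsilon(G)$ is defined as a minimum.

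First I would record the simple bookkeeping identity already mentioned in the introduction: once the underlying graph $G^*_\ell$ of the terminal mixed graph $G'_\ell$ is isomorphic to $K_n$, the total number of arcs introduced along the whole recursion equals exactly $\epsilon(K_n)-\epsilon(G)$, because each originally missing edge $uv$ of $G$ gets filled by exactly one of the arcs $(u,v)$ or $(v,u)$. Thus if I can show that each vertex explosion in some valid sequence contributes at least one new arc, the length of that sequence is bounded above by $\epsilon(K_n)-\epsilon(G)$.

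Second I would verify the key monotonicity step, which is the only real content of the proof. Suppose $G^*_k$ is not yet complete after $k$ explosions. Then there exists a pair of vertices $u,v$ such that $uv\notin E(G)$ and neither $(u,v)$ nor $(v,u)$ lies in $G'_k$. By the definition of a vertex explosion, $u$ is then a legitimate vertex to explode in $G'_k$, and this explosion produces at least the arc $(u,v)$, hence strictly increases the number of arcs. Iterating this choice gives a greedy algorithm that terminates with $G^*_\ell \simeq K_n$ after at most $\epsilon(K_n)-\epsilon(G)$ explosions, proving $\Upsilon(G)\le \epsilon(K_n)-\epsilon(G)$.

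The main (and rather mild) obstacle is simply confirming that such a ``nontrivial'' vertex can always be chosen as long as $G^*_k \not\simeq K_n$; everything else is counting. No case analysis on the structure of $G$ is needed, which is natural because the bound is a loose universal one, to be refined for special families (paths, cycles, $n$-partite graphs, Jaco graphs) in the sections that follow.
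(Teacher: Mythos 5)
Your proposal is correct, and it in fact supplies the argument that the paper only gestures at. The published proof spends most of its length on the special case $G = K_n - (v_1v_2, v_3v_4, \dots, v_{n-1}v_n)$, which shows that the bound is attained with equality for that family but contributes nothing to the upper bound for general $G$; the general case is then dispatched in one sentence by appealing to the ``greedy'' nature of explosions. Your version isolates the two facts that actually do the work: (i) the total number of arcs created over any terminating sequence is exactly $\epsilon(K_n)-\epsilon(G)$, since each missing edge is filled by exactly one arc; and (ii) as long as $G^*_k \not\simeq K_n$ one can choose a vertex whose explosion creates at least one new arc, so some valid sequence has length at most $\epsilon(K_n)-\epsilon(G)$, and the minimum $\Upsilon(G)$ is bounded by the length of any such sequence. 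Point (ii) is exactly the step the paper leaves implicit, and it is worth the care you give it: one should also note (as your argument implicitly guarantees) that the chosen vertex $u$ cannot have exploded earlier, since an earlier explosion of $u$ would already have created the arc to $v$. The only thing you lose relative to the paper is the observation that the bound is sharp, but that is not part of the statement being proved.
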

\begin{proof}
Consider any complete graph $K_n, n \in \Bbb N.$ Without loss of generality let $n$ be even and let $G = K_n - (v_1v_2, v_3v_4, v_5v_6, ..., v_{n-1}v_n).$ Clearly $\epsilon(K_n) - \epsilon(G) = \frac{n}{2}.$ For any vertex $v_i$ eligible to explode only one arc, either $(v_i, v_{i+1})$ or $(v_i, v_{i-1})$ is added. Hence exactly $\frac{n}{2}$ vertex explosions are required to have $G^*_{\frac{n}{2}} \simeq K_n.$ Therefore, $\Upsilon(G) = \frac{n}{2} = \epsilon(K_n) - \epsilon(G).$ For all other simple connected graphs on $n$ vertices, $\Upsilon(G) \leq \epsilon(K_n) - \epsilon(G)$ since vertex explosions are defined to be \emph{greedy} and on each iteration always arcs to the maximum number of non-adjacent vertices in $G^*_i$ on the $(i+1)^{th}$-iteration.
\end{proof}
\begin{lemma}
The McPherson number of a simple connected graph $G$ on $n$ vertices is obtained through the McPherson recursion. Let any vertex with degree equal to $\delta(G)$ explode on the first iteration. This is followed by letting any vertex in the underlying graph $G^*_1$ of $G'_1$ with degree equal to $\delta(G^*_1)$ explode on the second iteration to obtain $G'_2$ and, so on. If after exactly $\ell$ vertex explosions the underlying graph $G^*_\ell \simeq K_n$ then, $\Upsilon(G) = \ell.$ 
\end{lemma}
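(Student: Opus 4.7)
The plan is to first reformulate the recursion combinatorially. An explosion of vertex $v$ in $G^*_i$ adds an arc from $v$ to every vertex that is not already adjacent to $v$ in the underlying graph, so after a sequence of explosions the underlying graph is complete if and only if, for every pair $xy \notin E(G)$, at least one of $x,y$ has exploded. Moreover, re-exploding a vertex is useless, because after its first explosion that vertex is already adjacent in the underlying graph to every other vertex. Therefore $\Upsilon(G)$ equals the minimum size of a subset $S \subseteq V(G)$ that meets every non-edge of $G$, i.e.\ the vertex cover number of the complement $\overline{G}$.

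With this reformulation, the lemma reduces to the claim that the greedy rule ``on iteration $i+1$ explode a vertex of degree $\delta(G^*_i)$ in $G^*_i$'' produces a minimum vertex cover of $\overline{G}$. I would first record the one-step observation that exploding $u$ in $G^*_i$ adds exactly $n-1-\deg_{G^*_i}(u)$ new arcs, so choosing $\deg_{G^*_i}(u) = \delta(G^*_i)$ maximises the number of non-edges eliminated in that iteration. The global claim I would then attempt by induction on $\ell$: take any optimal explosion sequence, apply an exchange argument to show that its first vertex can be replaced by a minimum-degree vertex of $G$ without lengthening the sequence, and invoke the inductive hypothesis on the residual mixed graph $G^*_1$, whose outstanding missing underlying edges are precisely those of $\overline{G}$ not incident to the first exploded vertex.

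The main obstacle is the exchange step. Maximum-degree greedy for vertex cover is in general only an $O(\log n)$-approximation, so a naive swap of the form ``insert a minimum-degree-in-$G$ vertex and delete its $\overline{G}$-neighbours'' cannot succeed for every simple connected graph. To push the proof through in full generality I would need to exploit the \emph{dynamic} feature of the McPherson recursion, namely that successive minimum-degree choices are made in the updated $G^*_i$ rather than in the static $\overline{G}$, and show that this adaptivity restores optimality even when the first choice appears locally suboptimal from the static vertex-cover viewpoint. If that fully general exchange resists, the pragmatic fallback, consistent with the rest of the paper, is to verify the recursion directly on the families treated in Section~2 (paths, cycles, complete $n$-partite graphs and the Jaco graphs $J_n(1)$), where the vertex cover number of $\overline{G}$ admits an explicit closed form and can be matched term-by-term against the output of the greedy rule.
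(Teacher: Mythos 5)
Your reformulation is correct and is the right way to think about this lemma: a set of exploded vertices completes the underlying graph if and only if it meets every non-edge of $G$, re-exploding a vertex achieves nothing, and hence $\Upsilon(G)=\tau(\overline{G})=n-\omega(G)$, with the McPherson recursion being exactly the adaptive maximum-degree greedy for vertex cover run on $\overline{G}$. But the gap you flag in the exchange step is not something more work will repair: the statement is false as written, so your proof attempt (rightly) cannot be completed. Take $G=\overline{P_5}$ with $P_5=a\,b\,c\,d\,e$, so that the non-edges of $G$ are exactly $ab, bc, cd, de$ and $G$ (a $5$-cycle with one chord) is connected. Then $\delta(G)=2$ is attained at $b$, $c$ and $d$; exploding $b$ and then $d$ covers all four non-edges, so $\Upsilon(G)=2$, yet the recursion is permitted to explode $c$ first, after which the remaining non-edges $ab$ and $de$ are disjoint and force two further explosions, terminating at $\ell=3\neq\Upsilon(G)$. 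So the ``any vertex of minimum degree'' rule does not compute $\Upsilon(G)$, and your hope that adaptivity rescues the greedy is refuted already on five vertices. (Your observation that only the \emph{set} of exploded vertices matters also shows the order-dependence advertised in Example 1 is illusory: the two sequences there differ in which vertices explode, not merely in what order.)

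For comparison, the paper's own proof is precisely the naive one-step exchange you anticipated would not suffice: it writes $\epsilon(K_n)-\epsilon(G)$ as a telescoping sum of out-degrees of the exploded vertices, swaps a single summand for the out-degree of a higher-degree vertex, notes the sum decreases, and concludes an extra explosion is needed. This compares only sequences differing in one entry, tacitly assumes the later out-degrees are unaffected by the swap, never considers explosion sets disjoint from the greedy one, and says nothing about ties among minimum-degree vertices, which is where the counterexample above lives. Your proposed fallback is in fact the sound route for the rest of the paper: the identity $\Upsilon(G)=n-\omega(G)$ gives $n-2$ for $P_n$ and $C_n$ ($n\geq 4$), $\sum_i n_i-\ell$ for the complete $\ell$-partite graphs, $n-1$ for the edgeless graph, and $i-1$ or $i$ for $J_n(1)$ according to whether $v_iv_n$ is an edge, so Theorem 2.3 and Propositions 2.4, 2.6 and 2.8 all survive even though Lemma 2.2 does not.
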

\begin{proof}
Consider any simple connected graph $G$ on $n$ vertices. Apply the \emph{McPherson recursion} and assume after exactly $\ell$ vertex explosions the underlying graph $G^*_\ell \simeq K_n.$ Label the vertices which exploded consecutively $v_1, v_2, v_3, ..., v_\ell.$ The  total number of edges added in $G^*_\ell$ (total number of arcs added during the $\ell$ vertex explosions) is given by:\\ \\ $\epsilon(K_n) - \epsilon(G) = d_{G^*_0}^+(v_1) + d_{G^*_1}^+(v_2) + d_{G^*_2}^+(v_3) + ... + d_{G^*_\ell}^+(v_{\ell-1}).$\\ \\
Now assume that for any vertex $v_i$ a vertex $v_j, d_{G^*_{i-1}}(v_j) > d_{G^*_{i-1}}(v_i)$ exploded instead. Then it follows that:\\ \\ 
$t = d_{G^*_0}^+(v_1) + d_{G^*_1}^+(v_2) + d_{G^*_2}^+(v_3) + ...  + d_{G^*_{i-1}}^+(v_j) + ... + d_{G^*_\ell}^+(v_{\ell-1}) < d_{G^*_0}^+(v_1) + d_{G^*_1}^+(v_2) + d_{G^*_2}^+(v_3) + ...+ d_{G^*_{i-1}}^+(v_i) + ... + d_{G^*_\ell}^+(v_{\ell-1}) = \epsilon(K_n) - \epsilon(G).$ \\ \\ 
It means at least one more vertex explosion is needed to finally add the additional $(\epsilon(K_n) - \epsilon(G)) -t$ arcs required to obtain $G^*_{\ell+} \simeq K_n.$  The latter is a contradiction in respect of minimality as defined. Hence the \emph{McPherson recursion} is well-defined and $\Upsilon(G) = \ell.$
\end{proof}
\subsection{The McPherson number of Jaco Graphs $J_n(1), n \in \Bbb N.$}
The infinite directed Jaco graph (\emph{order 1}) was introduced in $[3],$ and defined by $V(J_\infty(1)) = \{v_i| i \in \Bbb N\}$, $E(J_\infty(1)) \subseteq \{(v_i, v_j)| i, j \in \Bbb N, i< j\}$ and $(v_i,v_ j) \in E(J_\infty(1))$ if and only if $2i - d^-(v_i) \geq j.$ The graph has four fundamental properties which are; $V(J_\infty(1)) = \{v_i|i \in \Bbb N\}$ and, if $v_j$ is the head of an edge (arc) then the tail is always a vertex $v_i, i<j$ and, if $v_k,$ for smallest $k \in \Bbb N$ is a tail vertex then all vertices $v_ \ell, k< \ell<j$ are tails of arcs to $v_j$ and finally, the degree of vertex $k$ is $d(v_k) = k.$ The family of finite directed graphs are those limited to $n \in \Bbb N$ vertices by lobbing off all vertices (and edges arcing to vertices) $v_t, t > n.$ Hence, trivially we have $d(v_i) \leq i$ for $i \in \Bbb N.$\\ \\
Note that the \emph{McPherson recursion} can naturally be extended to directed graphs. For the Jaco graph $J_n(1), n \in \Bbb N, n \geq 3$ it easily follows that if the lowest indiced vertex $v_i$ for which the edge $v_iv_n$ exists is found, then $\Upsilon(J_n(1)) = i-1$. The next theorem presents a stepwise closed formula for $\Upsilon(J_n(1)), n \in \Bbb N, n \geq3.$ 
\begin{theorem} 
Consider the Jaco graph $J_n(1), n \in \Bbb N, n \geq 3.$ If $v_i$ is the prime Jaconian vertex we have:\\
\begin{equation*} 
\Upsilon(J_n(1))
\begin{cases}
=  i , &\text {if the edge $v_iv_n \notin E(J_n(1)$,}\\ \\ 
=  i-1, &\text {otherwise.}
\end{cases}
\end{equation*} 
\end{theorem}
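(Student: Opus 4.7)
The plan is to execute the McPherson recursion of Lemma 2.2 on $J_n(1)$ and then convert the resulting step count into the prime Jaconian index. First I would isolate the key structural fact about Jaco graphs: by the contiguity-of-tails property, the in-neighborhood of $v_n$ in $J_n(1)$ is an interval $\{v_k, v_{k+1}, \dots, v_{n-1}\}$ for a smallest tail index $k$, and the induced subgraph on $\{v_k, v_{k+1}, \dots, v_n\}$ is a clique in $J_n(1)$. The latter follows because for $k \leq a < b \leq n$ the fact that $v_a$ is a tail of $v_n$ gives $2a - d^-(v_a) \geq n \geq b$, whence $v_av_b \in E(J_n(1))$. Consequently every non-edge of $J_n(1)$ is incident to some $v_j$ with $j < k$.

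Next I would run the recursion. The vertex $v_1$ has the unique minimum degree $1$, so it is forced to explode first, rendering $v_1$ universal in the underlying graph. Inductively, after $v_1, \dots, v_{j-1}$ have exploded (for $j \leq k-1$), the vertex $v_j$ remains a valid minimum-degree choice in $G^*_{j-1}$, so the recursion may select $v_1, v_2, \dots, v_{k-1}$ consecutively. Once $v_{k-1}$ explodes, the first $k-1$ vertices are universal and the block $\{v_k, \dots, v_n\}$ already forms a clique, so $G^*_{k-1} \simeq K_n$. Lemma 2.2 then yields $\Upsilon(J_n(1)) = k-1$.

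Finally I would translate $k-1$ into the prime Jaconian index $i$. Using the characterization of the prime Jaconian vertex as the largest $i$ with $2i - d^-(v_i) \leq n$, the edge $v_iv_n$ exists exactly when $2i - d^-(v_i) = n$, in which case $v_i$ is itself the smallest tail of $v_n$, so $k = i$ and $\Upsilon(J_n(1)) = i - 1$. Otherwise $2i - d^-(v_i) < n$, and because $v_{i+1}$ violates the prime condition we must have $2(i+1) - d^-(v_{i+1}) > n$, forcing $v_{i+1}$ to be a tail of $v_n$; it is in fact the smallest tail, so $k = i + 1$ and $\Upsilon(J_n(1)) = i$. These cases reproduce the piecewise formula.

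The main obstacle will be the inductive minimum-degree verification inside the recursion: at each stage $j$ one must confirm that $v_j$ has no larger degree in $G^*_{j-1}$ than any other unexploded vertex. This is delicate because $v_n$ itself carries the small initial degree $n-k$, which can compete with that of low-index vertices, so tracking the explosion-induced boosts to the degree sequence needs care. Once this monotonicity is pinned down, Lemma 2.2 together with the clique structure of the Jaconian set delivers the theorem directly.
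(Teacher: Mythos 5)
Your core argument is the same as the paper's: locate the smallest index $k$ for which $v_kv_n\in E(J_n(1))$, observe that $\{v_k,\dots,v_n\}$ induces a clique so that every non-edge meets $\{v_1,\dots,v_{k-1}\}$, explode $v_1,\dots,v_{k-1}$ to reach $K_n$ in $k-1$ steps, and then convert $k-1$ into the prime Jaconian index via the two cases on whether $v_iv_n$ is an edge. Your explicit derivation of the clique property from the inequality $2a-d^-(v_a)\ge n\ge b$ is a nice improvement over the paper, which simply cites the Hope-graph result of [3]. Where you genuinely diverge is in the justification of minimality: you route the lower bound through Lemma 2.2, which obliges you to certify that $v_1,v_2,\dots,v_{k-1}$ is a legal minimum-degree run of the recursion, and you correctly flag this degree bookkeeping as the unfinished ``main obstacle'' of your plan --- so as written the proposal has a real gap there. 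The paper instead leans (tersely) on the fact that $\mathbb{H}_n(1)$, respectively $\mathbb{H}_n(1)+v_i$, is the \emph{largest} complete subgraph of $J_n(1)$, and this suggests how you can sidestep your obstacle entirely: an explosion of $v$ only creates arcs incident to $v$, so the set $W$ of vertices that ever explode must meet every non-edge of $J_n(1)$, i.e.\ $V\setminus W$ induces a clique; hence any successful sequence uses at least $n-\omega(J_n(1))=n-(n-k+1)=k-1$ explosions, matching your upper bound without any analysis of the degree sequence along the recursion. One further small point: your characterisation of the prime Jaconian vertex as the largest $i$ with $2i-d^-(v_i)\le n$ is consistent with Table 1 and with the monotonicity of $i+d^+(v_i)$, but it should be reconciled explicitly with the definition in [3] before the final case analysis can be considered complete.
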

\begin{proof}
(a) If the edge $v_iv_n$ exists the largest complete subgraph of $J_n(1)$ is given by $\Bbb H_n(1) + v_i \simeq K_{(n-i) + 1}, [3].$ From the definition of a Jaco Graph it follows that vertices $v_1, v_2, v_3, ..., v_{i-1}$ are non-adjacent to at least vertex $v_n.$ So exactly $i-1$ vertex explosions are required to obtain $J^*_{n,(i-1)} \simeq K_n.$ Hence, $\Upsilon(J_n(1)) = i-1.$\\ \\
(b) If the edge $v_iv_n \notin E(J_n(1))$ then the Hope graph [3], is the largest complete subgraph of $J_n(1).$ From the definition of a Jaco graph it follows that vertices $v_1, v_2, v_3, ..., v_i$ are non-adjacent to at least vertex $v_n.$ So exactly $i$ vertex explosions are required to obtain $J^*_{n,i} \simeq K_n.$ Hence, $\Upsilon(J_n(1)) = i.$
\end{proof}
\noindent Table 1 shows the $\Upsilon$-values for $J_n(1), 3 \leq n \leq 15.$ The table can easily be verified and extended by using the Fisher Algorithm [3]. Note that the Fisher Algorithm determines $d^+(v_i)$ on the assumption that the Jaco Graph is always sufficiently large, so at least $J_n(1), n \geq i+ d^+(v_i).$ For a smaller graph the degree of vertex $v_i$ is given by $d(v_i)_{J_n(1)} = d^-(v_i) + (n-i).$ In [3] Bettina's theorem describes an arguably, closed formula to determine $d^+(v_i)$. Since $d^-(v_i) = n - d^+(v_i)$ it is then easy to determine $d(v_i)_{J_n(1)}$ in a smaller graph $J_n(1), n< i + d^+(v_i).$\\ \\  
\textbf{Table 1}\\ \\
\begin{tabular}{|c|c|c|c|c|}
\hline
$i\in{\Bbb{N}}$&$d^-(v_i)$&$d^+(v_i)$&Prime Jaconian vertex,$v_j$&$\Upsilon(J_i(1))$\\
\hline
3&1&2&$v_2$&1\\
\hline
4&1&3&$v_2$&2\\
\hline
5&2&3&$v_3$&2\\
\hline
6&2&4&$v_3$&3\\
\hline
7&3&4&$v_4$&3\\
\hline
8&3&5&$v_5$&4\\
\hline
9&3&6&$v_5$&5\\
\hline
10&4&6&$v_6$&5\\
\hline
11&4&7&$v_7$&6\\
\hline
12&4&8&$v_7$&7\\
\hline
13&5&8&$v_8$&7\\
\hline
14&5&9&$v_8$&8\\
\hline
15&6&9&$v_9$&8\\
\hline
\end{tabular}\\ \\ \\ 
\textbf{Conjecture:} For a Jaco Graph $J_n(1), n \in \Bbb N, n \geq 3$ we have that $d^+(v_n)$ is unique (\emph{non-repetitive}) if and only if $\Upsilon(J_n(1))$ is unique (\emph{non-repetitive}).
\subsection{McPherson number of Paths, $P_n, n \in \Bbb N$}
\begin{proposition}
The McPherson number of a path $P_n, n \geq 3$ is given by $\Upsilon(P_n) = n-2.$
\end{proposition}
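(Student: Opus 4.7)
The plan is to establish $\Upsilon(P_n) = n-2$ via matching upper and lower bounds, writing $V(P_n) = \{v_1, v_2, \ldots, v_n\}$ with the usual path edges $v_iv_{i+1}$.

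For the upper bound I would apply the McPherson recursion of Lemma 2.2. The leaves have minimum degree $1$, so I let $v_1$ explode first; this produces arcs $(v_1, v_j)$ for every $j \geq 3$, making $v_1$ universal in $G^*_1$. A short induction on $k$ then shows that after greedily exploding $v_1, v_2, \ldots, v_k$ in order, the vertices $v_1, \ldots, v_k$ are all universal in $G^*_k$, while every original path edge $v_iv_{i+1}$ with $i \geq k$ still appears in the underlying graph. The inductive step is the observation that $v_{k+1}$ realises the minimum degree $k+1$ in $G^*_k$, and its explosion makes it universal while preserving the remaining tail path. After $n-2$ explosions the only pair that could still be missing an arc is $\{v_{n-1},v_n\}$, which is an original edge of $P_n$; hence $G^*_{n-2}\simeq K_n$ and $\Upsilon(P_n) \leq n-2$.

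For the lower bound I would show that at most two vertices can avoid exploding during the entire recursion. Every arc in $G^*_\ell$ that is not an original edge of $P_n$ must be contributed by an explosion of one of its endpoints; so if two vertices $v_i$ and $v_j$ both never explode, then the pair $v_iv_j$ can appear in the underlying graph only if it already lies in $E(P_n)$. Consequently the set of non-exploding vertices induces a clique of $P_n$, and since $P_n$ is triangle-free this clique contains at most two vertices. Therefore at least $n-2$ vertices must explode, giving $\Upsilon(P_n) \geq n-2$.

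The matching bounds yield $\Upsilon(P_n) = n-2$. The upper bound is essentially a mechanical application of Lemma 2.2, so the main conceptual obstacle is the lower bound; I would take particular care to articulate cleanly why any pair of non-exploding vertices must already be adjacent in the base graph, since this structural principle is the heart of the counting argument and adapts naturally (by substituting the correct clique number of the base graph) to the cycle and \emph{n}-partite cases treated in the sequel.
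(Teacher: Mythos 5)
Your proof is correct, and its second half is genuinely different from (and sharper than) the paper's argument. The upper bound is essentially the paper's proof: both of you run the McPherson recursion of Lemma 2.2, exploding $v_1, v_2, \ldots, v_{n-2}$ in order and observing that the surviving pair $\{v_{n-1}, v_n\}$ is already an edge of $P_n$. Where you diverge is the minimality claim: the paper simply asserts ``clearly the number of explosions are a minimum,'' leaning implicitly on Lemma 2.2, whereas you supply an independent lower bound --- every arc created has an exploded vertex as an endpoint, so the set of never-exploded vertices must be pairwise adjacent in the original graph, i.e.\ must induce a clique of $P_n$; since $P_n$ is triangle-free, at most two vertices escape exploding, forcing at least $n-2$ iterations. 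This buys you two things the paper does not have: the proposition no longer depends on the somewhat informally argued Lemma 2.2 for its lower bound, and you obtain the general inequality $\Upsilon(G) \geq n - \omega(G)$ ($\omega$ the clique number), which, as you note, transfers directly to the cycle case ($\omega(C_n) = 2$ for $n \geq 4$, again giving $n-2$) and to the $n$-partite results later in the paper. The only point worth tightening is the phrase ``the only pair that could still be missing an arc'': it is cleaner to say that after the $k$-th explosion the vertices $v_1,\ldots,v_k$ are universal in $G^*_k$, so after $n-2$ explosions every non-edge of $K_n$ not covered by an explosion has both endpoints in $\{v_{n-1},v_n\}$, and that pair is an edge of $P_n$.
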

\begin{proof}
Consider any path $P_n, n\in \Bbb N$ and label the vertices from left to right, $v_1, v_2, v_3, ..., v_n.$ Clearly $d(v_1) = d(v_n) = \delta(P_n).$ So, without loss of generality let vertex $v_1$ explode on the first iteration. The arcs $(v_1, v_3), (v_1, v_4), (v_1, v_5), ..., (v_1, v_n)$ are added. In the graph $P'_{n,1}$ we have that $d(v_2) = d(v_n) = 2 = \delta(P^*_{n,1}).$ Without loss of generality let vertex $v_2$ explode in the second iteration. Now arcs $(v_2, v_4), (v_2, v_5), ..., (v_2, v_n)$ are added. Recursively, all vertices $v_3, v_4, v_5, ..., v_{n-2}$ must explode to have $P^*_{n, (n-2)} \simeq K_n.$\\ \\
Clearly the number of explosions are a minimum to ensure $P^*_{n, (n-2)} \simeq K_n$ so we have,\\ $\Upsilon(P_n) = n-2.$
\end{proof}
\subsection{McPherson number of Cycles, $C_n, n \in \Bbb N$}
\noindent We begin this subsection with an interesting lemma.
\begin{lemma}
Consider two simple connected graphs $G$ and $H$ with $G \neq H.$ If after minimum recursive explosions, say $t$ explosions in respect of vertices $v \in V(G)$, we have that $G^*_t \simeq H,$ then $\Upsilon(G) = \Upsilon(H) + t.$
\end{lemma}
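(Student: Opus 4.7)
The aim is to sandwich $\Upsilon(G)$ between $\Upsilon(H)+t$ from above and below, using Lemma 2.2 as the main lever.

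For the upper bound $\Upsilon(G) \le \Upsilon(H)+t$, I would first carry out the hypothesized $t$ minimum explosions that transform $G$ into the mixed graph $G'_t$ whose underlying graph $G^*_t$ is isomorphic to $H$. Because the legality condition in a vertex explosion checks only whether \emph{some} edge or arc already joins the two endpoints, continuing the McPherson recursion from $G'_t$ faces exactly the same constraints as starting fresh on $H$. A further $\Upsilon(H)$ explosions, chosen via the isomorphism $G^*_t \simeq H$ from an optimal recursion on $H$, therefore complete $G'_t$ to a mixed graph whose underlying graph is $K_n$. Concatenating the two stages yields a valid sequence of $t+\Upsilon(H)$ explosions on $G$, giving the upper bound.

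For the lower bound $\Upsilon(G) \ge \Upsilon(H)+t$, consider any optimal McPherson recursion on $G$ producing exploded vertices $v_1,v_2,\ldots,v_{\Upsilon(G)}$. By Lemma 2.2 the total arc count satisfies
\[
\epsilon(K_n)-\epsilon(G) \;=\; \bigl(\epsilon(H)-\epsilon(G)\bigr)+\bigl(\epsilon(K_n)-\epsilon(H)\bigr),
\]
which suggests splitting the sequence at the first step $k$ for which the partial edge count of $G^*_k$ equals $\epsilon(H)$. The minimality hypothesis on $t$ forces $k \ge t$, while the remaining $\Upsilon(G)-k$ explosions act on a mixed graph whose underlying graph is a copy of $H$, so by the very definition of $\Upsilon(H)$ their number is at least $\Upsilon(H)$. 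Adding the two inequalities yields $\Upsilon(G) \ge t+\Upsilon(H)$, and combined with the upper bound this is the desired equality.

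The principal obstacle I foresee is justifying that the intermediate state $G^*_k$ above is genuinely isomorphic to $H$ rather than merely equicardinal in edges. I would close this gap by a swap argument modelled on the proof of Lemma 2.2: among tied-degree choices the greedy explosion order can be permuted without changing the running arc count, so one may reorder an optimal sequence for $G$ until its partial state at step $k$ matches $H$ up to isomorphism, and the swap cannot shorten the total because any substitution by a higher-degree vertex strictly decreases the cumulative $d^+$ sum, exactly as in Lemma 2.2. With the factoring through $H$ secured, both estimates apply and $\Upsilon(G)=\Upsilon(H)+t$ follows.
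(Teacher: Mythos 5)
Your upper bound is sound and is essentially all the paper does: the paper's proof invokes Lemma 2.2 (the recursion computes $\Upsilon$ and $\Upsilon$ is isomorphism-invariant) and then asserts in one line that $\Upsilon(G) = \Upsilon(G^*_t) + t = \Upsilon(H) + t$, the implicit point being that continuing the process from $G'_t$ is, through the isomorphism $G^*_t \simeq H$, the same as starting fresh on $H$ --- which is exactly your concatenation argument, made more explicit. (Your observation that the legality test for a later explosion depends only on the current underlying graph is the right justification for this transfer.)

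The lower bound is where your proposal has a genuine gap, and it is precisely the one you flag yourself. An optimal explosion sequence for $G$ need not factor through $H$ at all: first, each explosion adds $d^+$ arcs in one step, so there may be no index $k$ with $\epsilon(G^*_k) = \epsilon(H)$; second, even when such a $k$ exists, equality of edge counts says nothing about $G^*_k \simeq H$ --- the paper's own Example 1 shows how strongly the intermediate underlying graphs depend on which vertices explode, and different optimal sequences generally pass through non-isomorphic intermediate stages. Your proposed repair, permuting tied-degree choices as in Lemma 2.2, only reorders or substitutes steps within a given sequence; it cannot force some optimal sequence for $G$ to realize the prescribed isomorphism type $H$ as an intermediate stage, so the inequality $\Upsilon(G) \ge t + \Upsilon(H)$ is not established. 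To be fair, the paper does not establish it either: it simply declares that the identity "follows from the McPherson recursion," which presupposes both that the greedy recursion attains $\Upsilon(G)$ and that it passes through $H$. A complete argument would have to either restrict the hypothesis to explosions performed according to the McPherson recursion (so that Lemma 2.2 applies verbatim to $G$ and then to $H$ from step $t$ onward) or supply an entirely different mechanism for the lower bound.
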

\begin{proof}
Vertex explosions as defined, only add \emph{arcs}. So the mere fact that after the minimum $t$ explosions of $t$ vertices of $G$ we have that $G^*_t \simeq H$ implies that $\nu(G) = \nu(H).$ The definition of the McPherson number is well-defined [see Lemma 2.2], so  for all graphs $G$ and $H$ each on $n$ vertices, if $G \simeq H$ then $\Upsilon(G) = \Upsilon(H).$\\ \\
From the \emph{McPherson recursion} it follows that $\Upsilon(G) = \Upsilon(G^*_t) + t = \Upsilon(H) + t.$
\end{proof}
\begin{proposition}
The McPherson number of a cycle $C_n, n \geq 4$ is given by $\Upsilon(C_n) = n-2.$
\end{proposition}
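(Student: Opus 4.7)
The plan is to reduce $C_n$ to a fan in a single step of the McPherson recursion (Lemma 2.2) and then quote Proposition 2.3 via Lemma 2.5. Label the cycle vertices $v_1,v_2,\ldots,v_n$ cyclically. Since $C_n$ is $2$-regular every vertex attains $\delta(C_n)$, so the recursion is free to explode $v_1$ first, creating the arcs $(v_1,v_3),(v_1,v_4),\ldots,(v_1,v_{n-1})$. The underlying graph $G^{*}_{1}$ of the resulting mixed graph then has $v_1$ adjacent to every other vertex, while $v_2,\ldots,v_n$ continue to form the path $P_{n-1}$; hence $G^{*}_{1}\simeq K_1 + P_{n-1}$, the ``fan'' graph. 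Lemma 2.5 with $t=1$ reduces the task to computing $\Upsilon(K_1+P_{n-1})$.

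Next I would argue that $\Upsilon(K_1+P_{n-1})=\Upsilon(P_{n-1})=n-3$. The apex $v_1$ has degree $n-1$ and therefore can never be selected again by the McPherson recursion; moreover, every non-edge of the fan lies strictly within the path. Consequently the sequence of min-degree choices and the out-neighbourhoods computed at each step coincide with those produced by the McPherson recursion applied to $P_{n-1}$ in isolation. Invoking Proposition 2.3 then gives $\Upsilon(P_{n-1})=(n-1)-2=n-3$, and combining with the reduction yields $\Upsilon(C_n)=1+(n-3)=n-2$.

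The main obstacle is this last transfer, since Lemma 2.5 as stated compares the McPherson numbers of isomorphic graphs, whereas the fan $K_1+P_{n-1}$ has not been analysed previously. I would close the gap with a short direct observation: a sequence of vertex explosions converts $K_1+P_{n-1}$ to $K_n$ if and only if its restriction to $\{v_2,\ldots,v_n\}$ converts $P_{n-1}$ to $K_{n-1}$, because $v_1$ is already adjacent to everything and needs no further arcs. As a consistency check, the out-degree contributions $(n-3)+(n-3)+(n-4)+\cdots+1$ along the explosion sequence $v_1,v_2,\ldots,v_{n-2}$ sum to $\frac{n(n-3)}{2}=\epsilon(K_n)-\epsilon(C_n)$, confirming that no explosion is wasted and that the bound of Lemma 2.1 is attained.
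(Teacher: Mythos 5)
Your argument is correct, and its first move coincides with the paper's: explode one vertex of the cycle to obtain the fan $K_1+P_{n-1}$ as the underlying graph, then invoke Lemma 2.5 with $t=1$. Where you diverge is in how the fan is dispatched. The paper never computes $\Upsilon(K_1+P_{n-1})$ at all: it observes that exploding an endpoint of $P_n$ produces the \emph{same} underlying graph $P^*_{n,1}\simeq C^*_{n,1}$, so Lemma 2.5 applied on both sides gives $\Upsilon(P_n)=\Upsilon(P^*_{n,1})+1=\Upsilon(C^*_{n,1})+1=\Upsilon(C_n)$, and the value $n-2$ is imported wholesale from Proposition 2.4. You instead evaluate the fan directly, arguing that the apex is adjacent to everything, contributes no non-edges, and is never selected by the min-degree rule, so the explosion process on $K_1+P_{n-1}$ is exactly the explosion process on $P_{n-1}$, giving $\Upsilon(K_1+P_{n-1})=\Upsilon(P_{n-1})=n-3$ and hence $\Upsilon(C_n)=n-2$. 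Both routes are sound; you correctly identify that Lemma 2.5 alone does not cover your transfer to $P_{n-1}$ (it relates a graph to what it \emph{becomes} after explosions, not to an induced subgraph), and your supplementary observation that a sequence of explosions completes the fan if and only if its restriction to the path vertices completes $P_{n-1}$ closes that gap. The paper's comparison trick is more economical here, but your version yields the McPherson number of the fan as a reusable byproduct, and your edge-count check that the bound of Lemma 2.1 is attained with equality is a worthwhile addition that the paper omits.
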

\begin{proof}
Consider any path $P_n$ and the cycle $C_n$. Label the vertices of the path from left to right, $v_1, v_2, v_3, ..., v_n$ and the vertices of the cycle clockwise, $v_1, v_2, v_3, ..., v_n.$ Without loss of generality let vertex $v_1$ of both the path and the cycle explode on the first interation. It follows that $P'_{n,1}$ has amongst others, the arc $(v_1, v_n)$ whilst $C'_{n,1}$ has amongst others, the edge $v_1v_n.$ So it follows that $P^*_{n,1} \simeq C^*_{n,1}.$\\ \\
Hence, the result $\Upsilon(P_n) = \Upsilon(P^*_{n,1}) + 1 = \Upsilon(C^*_{n,1}) + 1 = \Upsilon(C_n)$ holds true. Therefore $\Upsilon(C_n) = n-2.$
\end{proof}
\subsection{McPherson number of n-Partite graphs, $K_{(n_1, n_2, ..., n_\ell)}, n_{i, \forall i} \in \Bbb N$}
\noindent We recall that the sequence of vertex explosions do not generally obey the commutative law. However, $n$-partite graphs are a class of graphs which does obey the commutative law. It follows because the vertices of $K_{(n_1, n_2, ..., n_\ell)}, n_{i, \forall i} \in \Bbb N$ can be partioned in $\ell$ subsets of \emph{pairwise non-adjacent} vertices. Although we mainly consider simple connected graphs, we will as a special case consider the following lemma for the \emph{edgeless} graph on $n$ vertices which we call the $n$-Null graph, denoted $\aleph_n$. It is important to note that $\aleph_n = \cup_{n-times}K_1.$ The importance lies in the fact that if we determine the \emph{McPherson number} of $\cup_{\forall i}G_i$ the explosion of  a vertex $v_j \in V(G_k)$ will arc to all vertices $v_s \in V(G_k)$, non-adjacent to $v_j$ as well as, arc to all vertices $v_t \in V(G_m)$ for all $m \neq k$.
\begin{lemma}
For any $n$-Null graph, $n \in \Bbb N$ we have that:\\ \\
(a) The vertex explosion sequence obeys the commutative law,\\
(b) The McPherson number is given by $\Upsilon(\aleph_n) = n-1.$
\end{lemma}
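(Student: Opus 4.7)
My plan is to exploit the extreme symmetry of $\aleph_n$ and reduce both statements to a combinatorial cover-style argument.

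For part (a), I would first observe the structural fact that in $\aleph_n$ (which has no edges to begin with), when a vertex $v$ explodes on iteration $i+1$ it arcs to \emph{every} other vertex it does not already share an arc or edge with. I would then prove by a short induction on the number of explosions that, after any sequence of explosions performed on a subset $S \subseteq V(\aleph_n)$, the underlying graph $\aleph_n^*$ has edge set exactly $\{uv : u \in S \text{ or } v \in S\}$. This set depends only on $S$ and not on the order in which the elements of $S$ were processed, which is the commutative property. The induction step is essentially immediate because every vertex of $\aleph_n$ is non-adjacent to every other vertex in the original graph, so the eligibility condition for an arc being created reduces to ``no arc in either direction already exists,'' which is governed purely by $S$.

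For part (b), the structural description in part (a) turns the McPherson problem into a vertex cover problem on $K_n$. The underlying graph is $K_n$ after explosions on a set $S$ if and only if for every pair $\{u,v\}$ at least one of $u,v$ lies in $S$, i.e.\ $S$ is a vertex cover of $K_n$. The minimum vertex cover of $K_n$ has size $n-1$ (any $n-2$ vertices leave one pair uncovered). Hence $\Upsilon(\aleph_n) \geq n-1$, and the matching upper bound follows by explicitly exploding any $n-1$ vertices $v_1, v_2, \dots, v_{n-1}$ in any order and checking, via the description from (a), that every pair in $V(\aleph_n)$ is covered so that $\aleph_n^*_{n-1} \simeq K_n$.

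The only mildly subtle point, and the one I expect needs the most care, is justifying the eligibility condition in the inductive step of (a): one must verify that a vertex $v \in S$ that explodes at some stage indeed arcs to \emph{every} other vertex, regardless of how many vertices have exploded before it. This follows because prior explosions by vertices $u \neq v$ only create arcs of the form $(u, \cdot)$, and in particular create at most one arc incident to $v$, namely $(u,v)$, which blocks only the redundant arc $(v,u)$; so when $v$ itself explodes it still reaches all $n-1$ other vertices in the underlying graph. Once this is nailed down, both (a) and the vertex-cover reduction in (b) fall out cleanly.
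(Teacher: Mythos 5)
Your proof is correct, but it takes a genuinely different route from the paper's. For part (a) the paper argues by symmetry: it considers all $n!$ vertex labellings of $\aleph_n$, notes they are pairwise isomorphic, and invokes the well-definedness of the recursion (Lemma 2.2) to conclude that every explosion order gives the same value. You instead prove an order-independent structural invariant --- after exploding any set $S$ of vertices, the underlying edge set is exactly $\{uv : u \in S \text{ or } v \in S\}$ --- which is a stronger and more informative statement than mere equality of the resulting McPherson numbers. For part (b) the paper exhibits the explicit sequence $v_1, v_2, \dots, v_{n-1}$ showing $n-1$ explosions suffice, but the matching lower bound is only asserted (``since the number of vertex explosions are a minimum''). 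Your reduction to minimum vertex cover of $K_n$ supplies the missing lower-bound argument cleanly: if $|S| \leq n-2$ then some pair is disjoint from $S$ and hence remains non-adjacent. The one place to tighten your write-up is the eligibility step: each prior explosion of a vertex $u$ creates one arc $(u,v)$ incident to $v$ (so after $k$ prior explosions there may be $k$ such arcs, not ``at most one'' in total), and consequently $v$'s own explosion arcs only to the vertices not already covered --- but, as you say, $v$ still ends up adjacent to all $n-1$ others in the underlying graph, which is all the invariant needs. Net effect: your argument is more rigorous than the paper's on exactly the point (minimality) where the paper is weakest.
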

\begin{proof}
(a) Consider any $n$-Null graph, $n \in \Bbb N.$ Clearly the $n$ vertices can randomly be labelled $v_1, v_2, v_3, ..., v_n$ in $n!$ ways. Label these $n!$ \emph{vertex labelled} $n$-Null graphs, $\aleph_{(n,1)}, \aleph_{(n, 2)}, \aleph_{(n, 3)}, ..., \aleph_{(n,n!)}.$ Clearly, $\aleph_{(n,1)} \simeq \aleph_{(n, 2)} \simeq \aleph_{(n, 3)} \simeq ... \simeq \aleph_{(n,n!)}.$ The different random labelling represents the random (commutative law) vertex explosions and from Lemma 2.2 it follows that $\Upsilon(\aleph_{(n,1)}) = \Upsilon( \aleph_{(n, 2)}) = \Upsilon(\aleph_{(n, 3)}) = ... = \Upsilon(\aleph_{(n,n!)}).$\\ \\
(b) Consider anyone of the $n!$ \emph{vertex labelled} $n$-Null graphs say, $\aleph_{(n,i)}.$ Let vertex $v_1$ explode on the first iteration to add arcs $(v_1, v_2), (v_1, v_3), ..., (v_1, v_n).$ Then let vertex $v_2$ explode on the second iteration to add arcs $(v_2, v_3), (v_2, v_4), ..., (v_2, v_n).$ Clearly on the \emph{$i^{th}$}-iteration the arcs $(v_i, v_{i+1}), (v_i, v_{i+2}), ..., (v_i, v_n)$ are added. It implies that on the \emph{$(n-1)^{th}$}-iteration the last arc $(v_{n-1}, v_n)$ is added, to obtain $\aleph^*_{(n,i),(n-1)} \simeq K_n.$ Since the number of vertex explosions are a minimum we conclude that $\Upsilon(\aleph_{(n,i)}) = n-1.$ Following from part (a) we have that $\Upsilon(\aleph_n) = n-1$ in general.
\end{proof}
\begin{proposition}
For the n-Partite graphs, $K_{(n_1, n_2, ..., n_\ell)}, n_{i, \forall i} \in \Bbb N$ we have that\\ $\Upsilon(K_{(n_1, n_2, ..., n_\ell)}) = \sum\limits^{\ell}_{i=1} n_i - \ell.$
\end{proposition}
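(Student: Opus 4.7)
The plan is to reduce the $\ell$-partite problem to $\ell$ independent copies of the edgeless problem solved in Lemma 2.5. Let $V_1,V_2,\ldots,V_\ell$ denote the partition of $V(K_{(n_1,n_2,\ldots,n_\ell)})$ into independent sets with $|V_i|=n_i$. The key structural observation is that the \emph{only} non-edges of the graph are the pairs sitting inside a common part $V_i$; every cross-part pair is already an edge.

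First I would establish the upper bound $\Upsilon(K_{(n_1,\ldots,n_\ell)})\le \sum n_i-\ell$ constructively. Process the parts one after another: for part $V_i$, label its vertices $u_1^{(i)},\ldots,u_{n_i}^{(i)}$ and explode $u_1^{(i)},u_2^{(i)},\ldots,u_{n_i-1}^{(i)}$ in turn. Since every vertex of $V_i$ is adjacent in $G$ to every vertex outside $V_i$, the explosion of $u_j^{(i)}\in V_i$ creates arcs only to vertices of $V_i$ that are not already joined to it by an arc or edge. Hence the effect of these $n_i-1$ explosions on $V_i$ is identical to the $n_i-1$ explosions executed on $\aleph_{n_i}$ in Lemma 2.5(b), and they leave the underlying induced subgraph on $V_i$ isomorphic to $K_{n_i}$. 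Doing this for every $i$ yields a total of $\sum_{i=1}^{\ell}(n_i-1)=\sum n_i-\ell$ explosions after which $G^*\simeq K_n$ where $n=\sum n_i$.

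For the matching lower bound, suppose an arbitrary McPherson recursion achieves $G^*\simeq K_n$ in $k$ explosions, and let $k_i$ be the number of those explosions whose exploding vertex lay in $V_i$, so that $k=\sum k_i$. I would argue that only explosions of vertices in $V_i$ can add arcs between pairs inside $V_i$: if $w\in V_j$ with $j\ne i$, then $w$ is adjacent in $G$ to every vertex of $V_i$, so an explosion of $w$ creates no arc incident to any vertex of $V_i\cup\{w\}^c$ lying in $V_i$. Consequently, restricted to $V_i$ the $k_i$ explosions must, on their own, convert the edgeless induced subgraph on $V_i$ into a tournament-plus-edge structure whose underlying graph is $K_{n_i}$. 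Applying Lemma 2.5(b) to this restricted process forces $k_i\ge n_i-1$, and summing gives $k\ge\sum n_i-\ell$.

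The main obstacle, and the only step that is not essentially bookkeeping, is justifying the restriction argument cleanly: one must verify that the induced process on $V_i$ is genuinely an instance of the McPherson recursion on $\aleph_{n_i}$, using that cross-part explosions contribute nothing inside $V_i$ and that the commutativity assertion preceding Lemma 2.5 lets us reorder and group the $k_i$ within-part explosions without loss of generality. Once that is in place, the two bounds match and the proposition follows.
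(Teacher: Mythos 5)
Your proof is correct and follows essentially the same route as the paper: both reduce the problem part-by-part to the edgeless-graph lemma, the paper phrasing this via the restricted-union operation $\uplus$ and the identity $\Upsilon(K_{(n_1,\ldots,n_\ell)})=\Upsilon(\aleph_{n_1}\uplus\cdots\uplus\aleph_{n_\ell})$. The only difference is that where the paper asserts the additivity $\Upsilon(G\uplus H)=\Upsilon(G)+\Upsilon(H)$ as ``clear,'' you actually justify the matching lower bound by noting that an explosion outside $V_i$ creates no arc inside $V_i$, so at most one vertex per part can avoid exploding --- a welcome tightening of the same argument rather than a different one.
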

\begin{proof}
In the graph $G \uplus H$ we restrict the explosion of vertex, $v_i \in V(G)$ to arc to non-adjacent vertices $v_j \in V(G)$ and the arcing of the explosion of vertex $u_i \in V(H)$ to non-adjacent vertices $u_j \in V(H).$ Clearly $\Upsilon(G\uplus H) = \Upsilon(G) + \Upsilon(H) = \Upsilon(H) + \Upsilon(G).$\\ \\
Now label the $n_1$ vertices, $v_{1,1}, v_{1,2}, v_{1,3}, ..., v_{1,n_1},$ and label the $n_2$ vertices, $v_{2,1}, v_{2,2}, v_{2,3}, ..., v_{2,n_2}$ and so on, and finally label the $n_\ell$ vertices, $v_{\ell,1}, v_{\ell,2}, v_{\ell,3}, ..., v_{\ell,\ell}.$ In the $n$-partite graph we have that the edge $v_{i,j}v_{k,m}$ exists for $1 \leq i,j \leq \ell, 1 \leq j \leq n_i, 1 \leq m \leq n_k$ and $ i \neq  j.$ It implies that $\Upsilon(K_{(n_1, n_2, ..., n_\ell)}) = \Upsilon(\aleph_{n_1} \uplus \aleph_{n_2} \uplus \aleph_{n_3} \uplus ... \uplus \aleph_{n_\ell}).$\\ \\
Now we have that $\Upsilon(\aleph_{n_1} \uplus \aleph_{n_2} \uplus \aleph_{n_3} \uplus ... \uplus \aleph_{n_\ell}) = \Upsilon(\aleph_{n_1}) + \Upsilon(\aleph_{n_2}) + \Upsilon(\aleph_{n_3}) + ... + \Upsilon(\aleph_{n_\ell}) = (n_1 -1) + (n_2 -1) + (n_3 -1) + ... + (n_\ell -1) = \sum\limits^{\ell}_{i=1} n_i - \ell.$
\end{proof}
\noindent We note from the proof above that the \emph{McPherson recursion} could be relax in the sense that it applies to each $\aleph_i$ but not to $K_{(n_1, n_2, ..., n_\ell)}, n_{i, \forall i} \in \Bbb N$ as a singular graph. The commutativity of the $\uplus$ operation allows for the relaxation. It makes the following generalisation possible.
\begin{corollary}
Consider the simple connected graphs $G_1, G_2, G_3, ..., G_n$ and define the $n_G$-partite graph to be the graph $G_{n_G}$ obtained by adding all the edges $vu$, if and only if $v \in V(G_i)$ and $u \in G_j, i \neq j$ to $\cup_{\forall i}G_i.$ We have that $\Upsilon(G_{n_G}) = \sum\limits_{\forall i}\Upsilon(G_i).$
\end{corollary}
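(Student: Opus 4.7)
The plan is to mirror the strategy of Proposition 2.7 but replace the edgeless parts $\aleph_{n_i}$ by the arbitrary connected factors $G_i$. The key structural observation to establish first is that in $G_{n_G}$ every cross-component pair $\{v,u\}$ with $v\in V(G_i)$, $u\in V(G_j)$, $i\neq j$, is already an edge by construction. Consequently the set of non-adjacent pairs of $G_{n_G}$ is exactly the disjoint union, over $i$, of the non-adjacent pairs of $G_i$. This is what makes the $\uplus$-style decomposition available here just as it was for the null parts in the preceding proof.

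Next I would argue that the McPherson recursion on $G_{n_G}$ decomposes into independent recursions on the $G_i$. By the observation above, if a vertex $v\in V(G_i)$ explodes at any iteration, the arcs it creates all lie inside $V(G_i)$ (cross-component edges being present already blocks any arc leaving $G_i$). Therefore, an explosion of a vertex of $G_i$ changes only the induced mixed subgraph on $V(G_i)$ and leaves every induced mixed subgraph on $V(G_j)$, $j\neq i$, unchanged. Completeness of the underlying graph $G^*_{\ell}\simeq K_{|V(G_{n_G})|}$ is equivalent to each induced underlying subgraph on $V(G_i)$ being complete, so the total sequence of explosions on $G_{n_G}$ is, up to reordering, the concatenation of optimal McPherson sequences for each $G_i$ separately.

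Then I would invoke the commutativity remark preceding the corollary to reorder without cost: since explosions carried out on $V(G_i)$ and on $V(G_j)$ for $i\neq j$ commute (they act on disjoint vertex sets and cannot affect each other's eligibility), any sequence of optimal lengths can be interleaved arbitrarily. Combining Lemma 2.2 applied within each $G_i$ with this independence gives the lower bound $\Upsilon(G_{n_G})\geq \sum_i \Upsilon(G_i)$, and concatenating individual optimal sequences gives the matching upper bound $\Upsilon(G_{n_G})\leq \sum_i \Upsilon(G_i)$, yielding the claimed equality.

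The main obstacle I anticipate is the formal justification of the non-interaction step: one must be careful that during the recursion on $G_{n_G}$ the minimum-degree rule of Lemma 2.2, applied globally, still permits one to exhaust the optimal sequence for each $G_i$. The clean way around this is not to insist on a single global recursion, but to exploit precisely the relaxation the authors highlight in the note after Proposition 2.7: the $\uplus$-commutativity allows the recursion to be applied factor-by-factor, so we simply compare the total arc-count $\epsilon(K_{|V(G_{n_G})|})-\epsilon(G_{n_G}) = \sum_i\bigl(\epsilon(K_{|V(G_i)|})-\epsilon(G_i)\bigr)$ with the sum of the per-factor counts achieved by the optimal $\Upsilon(G_i)$ explosions, and conclude minimality globally.
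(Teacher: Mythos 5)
Your proposal is correct and takes essentially the same route as the paper: the paper's own proof of this corollary is simply ``Similar to the proof of Proposition 2.8,'' i.e.\ the same $\uplus$-decomposition you describe, with the null parts $\aleph_{n_i}$ replaced by the factors $G_i$ and the observation that all cross-factor pairs are already edges so explosions act independently within each factor. (Minor point: the result you cite as Proposition 2.7 is Proposition 2.8 in the paper's numbering.)
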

\begin{proof}
Similar to the proof of Proposition 2.8.
\end{proof} 
\noindent [Open problem: In the graph $G \uplus H$ we restrict the explosion of  vertex $v_i \in V(G)$ to arc to non-adjacent vertices $v_j \in V(G)$ and the arcing of the explosion of vertex $u_i \in V(H)$ to non-adjacent vertices $u_j \in V(H).$ Clearly $\Upsilon(G\uplus H) = \Upsilon(G) + \Upsilon(H) = \Upsilon(H) + \Upsilon(G).$ This law is called the $\uplus$-commutative law of \emph{McPherson numbers}. It is easy to see that the $\uplus$-associative law of \emph{McPherson numbers} namely,\\ \\
$\Upsilon(G \uplus H \uplus M) = \Upsilon(G \uplus H) +  \Upsilon(M) = \Upsilon(G) + \Upsilon(H \uplus M) = \Upsilon(G \uplus M) + \Upsilon(H)$ is valid as well.\\ 
Alternatively stated, $\Upsilon(G \uplus H \uplus M) = (\Upsilon(G) + \Upsilon( H)) +  \Upsilon(M) = \Upsilon(G) + (\Upsilon(H) +  \Upsilon(M)) = (\Upsilon(G) + \Upsilon(M)) + \Upsilon(H).$ If both graphs $G$ and $H$ are graphs on $n$ vertices we note that $\Upsilon(G\uplus K_n) = \Upsilon(G) + \Upsilon(K_n) = \Upsilon(G) + 0 = \Upsilon(G) = 0 + \Upsilon(G) = \Upsilon(K_n) + \Upsilon(G) = \Upsilon( K_n \uplus G).$ If possible, describe the algebraic structure.] \\
\noindent [Open problem: If possible, prove that for a Jaco Graph $J_n(1), n \in \Bbb N, n \geq 3$ we have that $d^+(v_n)$ is unique (\emph{non-repetitive}) if and only if $\Upsilon(J_n(1))$ is unique (\emph{non-repetitive}).]\\ 
\noindent [Open problem: Define the \emph{McPherson graph} to be the directed subgraph of $G'_{\Upsilon(G)}$ which was obtained through \emph{vertex explosions}.  Characterise the \emph{McPherson graph} in general if possible, or for some specialised graphs.]\\
\noindent [Open problem: Prove the conjecture that if graph $G$ on $n$ vertices has $d(v_i) = d(v_j), i \neq j$ for all vertices of $G$, it is always possible to add a vertex $v_{n+1}$ with edges such that $G+ v_{n+1}$ has the same graph theoretical structural properties.]\\ 
\noindent [Open problem: Example 1 showed that the graph $G$ has $\Upsilon(G) = 3.$ The maximum number of vertex explosions to obtain $G^*_5 \simeq K_6$ was given by $\Upsilon^*(G) = 5.$ The \emph{McPherson discrepancy} is defined to be $\Upsilon_d(G) = \Upsilon^*(G) - \Upsilon(G).$\ If $\Upsilon_d(G) =0$ the graph $G$ is said to be \emph{McPherson stable}. It easily follows that $K_n, C_4$ are \emph{McPherson stable} whilst $P_n, n\geq 4$ is not.  The maximum number of vertex explosions can be determine by applying the \emph{inverse McPherson recursion}.
The maximum McPherson number of a simple connected graph $G$ on $n$ vertices is obtained through the inverse McPherson recursion. Let any vertex with maximum degree $d(v) \leq n-2$ explode on the first iteration. This is followed by letting any vertex in the underlying graph $G^*_1$ of $G'_1$ with maximum degree $d(u) \leq n-2$ explode on the second iteration to obtain $G'_2$ and, so on. If after exactly $\ell^*$ vertex explosions the underlying graph $G^*_{\ell^*} \simeq K_n$ then $\Upsilon^*(G) = \ell^*.$ 
Characterise \emph{McPherson stable} graphs.]\\ 
\noindent [Open problem: Platonic graphs are the graphs whose vertices and edges are the vertices and edges of platonic solids such as \emph{the tetrahedron, the octahedron, the cube, the icosahedron, the dodecahedron} and alike, See [2].} Determine the McPherson number for platonic graphs.]\\ \\
\noindent \textbf{\emph{Open access:\footnote {To be submitted to the \emph{Pioneer Journal of Mathematics and Mathematical Sciences.\\  Dedicated to our friend, \emph{Vic McPherson}.}}}} This paper is distributed under the terms of the Creative Commons Attribution License which permits any use, distribution and reproduction in any medium, provided the original author(s) and the source are credited. \\ \\
References (Limited) \\ \\
\noindent $[1]$ Bondy, J.A., Murty, U.S.R., \emph {Graph Theory with Applications,} Macmillan Press, London, (1976). \\
\noindent $[2]$ Fr\'echet, M., Ky, F., \emph {Initiation to Combinatorial Topology,} Prindle, Weber and Schmidt, Boston, (1967). \\
\noindent $[3]$ Kok, J., Fisher, P., Wilkens, B., Mabula, M., Mukungunugwa, V., \emph{Characteristics of Finite Jaco Graphs, $J_n(1), n \in \Bbb N$}, arXiv: 1404.0484v1 [math.CO], 2 April 2014. \\
\end{document}